\newtheorem{theo}{Theorem}[section]
\newtheorem{coro}[theo]{Corollary}
\newcommand{\R}{{\bf R}}
\newcommand{\p}{\partial}
\newcommand{\lsm}{\lesssim}
\newcommand{\ep}{\varepsilon}
\newcommand{\al}{\alpha}
\newcommand{\bt}{\beta}
\newcommand{\dt}{\delta}
\newcommand{\F}{{\cal{F}}}
\newcommand{\dsp}{\displaystyle}
\newcommand{\nb}{\nabla}
\newcommand{\fa}{\frac}
\newcommand{\lp}[1]{\left(#1 \right)}
\newcommand{\lb}[1]{\left\{#1 \right\}}
\newcommand{\lbt}[1]{\left[#1 \right]}
\newcommand{\gb}[1]{\llbracket #1 \rrbracket}
\begin{document}
 
\begin{center}
{\large\bf 
Existence of solutions of semilinear wave equations with time-dependent propagation speed and time derivative nonlinearity
}
\end{center}

\vspace{3mm}
\begin{center}

Kimitoshi Tsutaya$^\dagger$ 
        and Yuta Wakasugi$^\ddagger$ \\
\vspace{1cm}

 $^\dagger$Graduate School  of Science and Technology \\
Hirosaki University  \\
Hirosaki 036-8561, Japan%\\
%E-mail: tsutaya@hirosaki-u.ac.jp
\footnotetext{{\bf AMS Subject Classifications}: 35Q85; 35L05; 35L70.}  
%\footnotetext{
%{\bf Correspondence}:  Kimitoshi Tsutaya, E-mail: tsutaya@hirosaki-u.ac.jp}
\footnotetext{
{\bf Funding information}: 
The research was supported by JSPS KAKENHI Grant Number A22H000971. }
%\footnotetext{
%{\bf Short running title}:  Solutions of semilinear wave equations}

\vspace{5mm}
        $^\ddagger$
Graduate School of Advanced Science and Engineering \\
Hiroshima University \\
Higashi-Hiroshima, 739-8527, Japan%\\
%E-mail: wakasugi@hiroshima-u.ac.jp
\end{center}

\begin{abstract}
Consider wave equations with time derivative nonlinearity and time-dependent propagation speed which are generalized versions of the wave equations in the Friedmann-Lema\^itre-Robertson-Walker \\
 (FLRW) spacetime, the de Sitter spacetime and the anti-de Sitter space time. 
%for a massless scalar field.  
We show lower bounds of the lifespan of solutions as well as the global existence by providing an integrability condition on the propagation speed function, which is applicable to the nonlinear wave equation in the expanding FLRW spacetime including the de Sitter spacetime. 
%We provide an integrability condition on the propagation speed function to show the existence of global solutions, which is applicable to the nonlinear wave equation in the expanding FLRW spacetime including the de Sitter spacetime. 
We also prove that blow-up in a finite time occurs for the generalized form of the equation in contracting universes such as the anti-de Sitter spacetime, 
as well as upper bounds of the lifespan of blow-up solutions.  
%in terms of the support property of solutions, 
%The blow-up condition is the same as in the accelerated expanding Friedmann-Lema\^itre-Robertson-Walker (FLRW) spacetime. We also show the same results for the space derivative nonlinear term.  
    
\end{abstract}

%{\bf Keywords}: Wave equation, FLRW, de Sitter spacetime, anti-de Sitter spacetime, Global existence, Blow-up, Lifespan. 

%===============================================
\section{Introduction}
%===============================================

\addtolength{\baselineskip}{2mm}

We consider the following Cauchy problem: 
\begin{equation}
\begin{cases}
\dsp
u_{tt}-a(t)^2\Delta u+b(t)u_t=F(u, u_t, \nb_x u), &\qquad  t>0, \; x\in \R^n,   \\
u(0,x)=\ep u_0(x), \; u_t(0,x)=\ep u_1(x), &\qquad x\in \R^n,  
\end{cases}
\label{eq-0}
\end{equation}
where $F=|u|^p, \; |u_t|^p$ or $|\nb_xu|^p$ with $p>1$, 
$\Delta=\p_1^2+\cdots +\p_n^2, \; \nb_xu = (\p_1 u, \cdots,\p_n u), \; \p_j=\p/\p x^j, \; j=1,\cdots,n, 
 \; (x^1,\cdots, x^n)\in \R^n$,  and $\ep>0$ is a small parameter.  

In \cite{TW1}-\cite{TW6} 
we have studied blow-up solutions of semilinear wave equations in the spatially flat FLRW (Friedmann-Lema\^itre-Robertson-Walker) spacetimes, which are of the form Eq. in  \eqref{eq}. 
Typical coefficients $a(t)$ and $b(t)$ are $(a(t),b(t))=((t+1)^{-\al}, \mu/(t+1))$ in the FLRW spacetimes, and $(a(t),b(t))=(e^{-Ht},\mu)$ in the de Sitter spacetime, where $\al\ge 0, \; \mu\ge 0$ and $H>0$ are constants.  

The remarkable result among these is that 
if $F=|u|^p$ or $|\nb_xu|^p$ and $(a(t),b(t))=((t+1)^{-\al}, \mu/(t+1))$ or $(a(t),b(t))=(e^{-Ht},\mu)$ with $\al>1$, $\mu\ge 0$ and $H>0$, then blow-up in a finite time can occur for all $p>1$. 

In \cite{TW7} it is shown that for generalized $a(t)$ and $b(t)$, blow-up in a finite time for \eqref{eq-0} occurs provided that 
 % $a(t)$ and $b(t)$ satisfy
\begin{align*}
&a,b\in C^1([0,\infty)), \; a(t)>0,\;
0<C_1\le b(t)\le C_2, \; \limsup_{t\to\infty}\fa{|\dot b(t)|}{b(t)^2}<1,  \\
\intertext{and } 
& \begin{cases} 1-n\bt(p-1)>0 & \mbox{for }F=|u|^p, \\
 1-\bt\lb{n(p-1)+p}>0 \quad & \mbox{for }F= |\nb_xu|^p,
  \end{cases}
\end{align*}
where $\bt=\limsup_{t\to \infty}(\ln A(t)/\ln t)$ and $\dsp A(t)=\int_0^t a(s)ds$. 
We note that $\bt =0$ for $a\in L^1([0,\infty)$, which leads to blow-up for all $p>1$. 
%Examples of $a(t)$ satisfying \eqref{asm-a} are $a(t)=e^{-Ht}$ with a positive constant $H$and $a(t)=(t+1)^{-\al}$ with $\al>1$. 

In contrast to these nonlinear terms, for the time derivative nonlinearity, circumstances are different. 
It is shown by \cite{CFO} that for  $F=u_t^2$, there exist global solutions of \eqref{eq} with small initial data if $a>0, \; \dot a=da/dt<0, \; a\in L^1([0,\infty))$ and $b=-n\dot a/a$. 
We refer to \cite{CFO} for other quadratic nonlinearity. 
%In \cite{CFO} other quadratic nonlinearity is treated. 

%Global existence for $F=|u_t|^p$ with an integer $p\ge 2$ is proved
For more general $F=|u_t|^p$ with an integer $p\ge 2$, global existence of solutions is proved by \cite{WY} under the conditions  
$a\in C^\infty([0,\infty)), \; a>0, \; \dot{(a^{-1})}>0, \;\ddot{(a^{-1})}>0$, $b=-n\dot a/a$,   
and  the integrability conditions of $\fa{d^k}{dt^k}(\dot a/a)$ and $a^{-1}\fa{d^k}{dt^k}(a\dot a)$ for $1\le k\le L$ and $L\ge n+4$.

In the present paper we first show global existence of solutions of \eqref{eq} with $F=|u_t|^p$ under weaker conditions on $a$ and $b$. 
Especially, this result provides that if $n=1,2$ and $a^{p-1}\in L^1([0,\infty))$, then there exist global solutions of \eqref{eq} with small initial data for all $p>1$. 
If we go back to the original wave equation in the FLRW spacetime, that is $\Box_g u=-|u_t|^p$ where $\Box_g$ is the d'Alembertian in the FLRW spacetime, 
this global existence result corresponds to the case of the expanding universe.

The rest part of the paper is concerned with the case of the contracting universe 
like the anti-de Sitter spacetime. We remark that in this case the coefficient $b(t)$ becomes non-positive. 
As the second main result, we show blow-up in a finite time of solutions for the time derivative nonlinearity and more general $a(t)$ and $b(t)$ than the case of the anti-de Sitter spacetime, following \cite[Proposition 1]{Na}.
Whereas \cite[Proposition 1]{Na} deals with large initial data, our study focuses on small initial data. 
For this reason, it is necessary to distinguish between cases according to the behavior of $a(t)$ and $b(t)$. 
%Accordingly, we need to distinguish between different cases depending on the behavior of $a(t)$ and $b(t)$. 

Our result shows that the lifespan of blow-up solutions is quite short as estimated by $\ln 1/\ep$ in some case.

This paper is organized as follows. 
In Section 2, we first give a local existence result for \eqref{eq-0} with $F=|u_t|^p$, then  state a global existence theorem.   
We use the energy method to prove the theorem in terms of the integrability of $a(t)^{p-1}$ following \cite{GT,IT}. 
We also show a lower bound of the lifespan of local solutions in time if the existence of global solutions  is not ensured. In addition, we apply the theorem to the equation in the flat FLRW spacetime and discuss the critical exponent of $p$ for the existence of global solutions in some cases. 
%in the effective damping case that $b(t)$ is bounded above and below. 
%We  divide results into three cases depending on the behavior of the function $a(t)$ as $t\to\infty$. 
In Section 3, we consider the non-positive coefficient $b(t)$. Our second main result presents that upper bounds of the lifespan of the blow-up solutions 
differ depending on the relationship on $a(t)$ and $b(t)$. 
%the integral condition of the initial data $u_1$. 

%At the end of the section we divide results into three cases in the same way as in Section 2. 
%Finally in Appendix we improve our earlier condition on $a(t)$ for the property of finite speed of propagation for classical solutions. 

%============================
%      Section 2
%============================
\section{Global Existence}
\setcounter{equation}{0}

Consider the Cauchy problem 
\begin{equation}
\begin{cases}
\dsp
u_{tt}-a(t)^2\Delta u+b(t)u_t=F(u_t), &\qquad  t>0, \; x\in \R^n,   \\
 & \\
u(0,x)=f(x), \; u_t(0,x)=g(x), &\qquad x\in \R^n,  
\end{cases}
\label{eq}
\end{equation}
and assume that $a\in C^1([0,\infty))$ and $b\in C([0,\infty))$ satisfy
\begin{equation}
a(t)>0 \quad \mbox{and }\quad \dot a +ab\ge 0 \quad \mbox{ for }t\ge 0, 
\label{ab-asm}
\end{equation}
where $\dot a=da/dt$.

Let $p>1$ and $\gb{p}$ denote the largest integer smaller than $p$. 
We also assume that 
there exist $p>1$ and some  constant $C>0$  %and integer $k$ with $1\le k\le p$ 
such that 
$F(v)$ satisfies
\begin{equation}
\begin{cases}
F\in C^{\gb{p}}(\R)%\quad  \mbox{ for }0\le k\le p
, \\
%F^{(j)}(0)=0 \quad  \mbox{ for }0\le j\le k, \\
|F^{(j)}(v)|\le C|v|^{p-j} \quad  \mbox{ for }0\le j\le \gb{p},   \\
|F^{(\gb{p})}(v_1)-F^{(\gb{p})}(v_2)|\le C|v_1-v_2|^{p-\gb{p}}. 
\end{cases}
\label{F}
\end{equation}
We see from \eqref{F} that $F$ satisfies $F^{(j)}(0)=0$ for $0\le j\le \gb{p}$. 

%We also assume that $F(v)$ satisfies
%\begin{equation}
%|F(v)-F(\tilde v)|\lsm |v-\tilde v|(|v|+|\tilde v|)^{p-1}. 
%\label{F}
%\end{equation}

We write $|u|_\infty=\|u\|_{L^\infty}, \; \|u\|_p=\|u\|_{L^p}$ with $p\ge 1$ and define  
$\|u\|_{H^s}=\|(1-\Delta)^{s/2}u\|_2$, 
$|\nb|^su=\F^{-1}[|\xi|^s\F u]$ for $s\in\R$, where $\F$ is the Fourier transform. 

Let 
\[
E[u](t)^2=\int (a(t)^{-2}u_t^2+|\nb_xu|^2)dx. 
\]
If $u$ is a solution of \eqref{eq}, then 
\begin{align*}
\fa 12\fa d{dt}E[u](t)^2
&= \int(-a^{-3}\dot a u_t^2+a^{-2}u_tu_{tt}+\nb_xu\cdot \nb_xu_t)dx \\
&= \int(-a^{-3}\dot a u_t^2+u_t(a^{-2}u_{tt}-\Delta u))dx \\
&= \int(a^{-2}u_tF-a^{-3}(\dot a+ab)u_t^2)dx. 
\end{align*}
Using \eqref{ab-asm} and the Schwarz inequality, we have 
\[
\fa 12\fa d{dt}E[u](t)^2 \le \int a^{-2}u_tF dx \le \|a^{-1}F\|_2 E[u](t). 
\]
Thus, perfoming the differentiation on the left-hand side and integrating the both sides from $0$ to $t$, we obtain
\begin{equation}
E[u](t)\le E[u](0)+\int_{0}^t \|a(\tau)^{-1}F(\tau)\|_2d\tau. 
\label{Energyine}
\end{equation}
Similarly, for $|\nb|^su$
\begin{equation}
E[|\nb|^su](t)\le E[|\nb|^su](0)+\int_{0}^t \|a(\tau)^{-1}|\nb|^sF(\tau)\|_2d\tau. 
\label{sEnergyine}
\end{equation}
Setting
\begin{equation}
E_s[u](t)^2=\|a(t)^{-1}u_t(t)\|_{H^s}^2+\|\nb_xu\|_{H^s}^2, 
\label{sEnergy}
\end{equation}
we see from \eqref{Energyine} and \eqref{sEnergyine} that 
\begin{equation}
E_s[u](t)\le E_s[u](0)+C\int_{0}^t \|a(\tau)^{-1}F(\tau)\|_{H^s}d\tau. 
\label{sEine}
\end{equation}

%\vspace{1cm}
We set for $0<T\le \infty$, 
\begin{equation}
\begin{split}
X_T^s&=C([0,T):H^{s+1}(\R^n))\cap C^1([0,T):H^{s}(\R^n)), \\
X_{M,T}&=\{u\in X_T^s: \|u\|_{X_T^s}\le M\}, \\
\|u\|_{X_T^s}&=\sup_{0\le t\le T}E_s[u](t), \\
d(u,v)&=\|u-v\|_{X_T^0}. 
\end{split}
\label{XTs}
\end{equation} 
%\begin{subequations}
%\begin{align}
%X_T^s&=C([0,T):H^{s+1}(R^n))\cap C^1([0,T):H^{s}(R^n)), \\
%\end{align}
%\end{subequations} 

%\vspace{1cm}
%We first treat the case $F(u_t,\nb_xu)=|u_t|^p$. 

We first state a local existence theorem. 
%===========================================
%             Theorem  2.1
%===========================================
\begin{theo}
Let $n\ge 1$, $p>1$, 
and let $a$ and $b$ 
satisfy \eqref{ab-asm}. 
Assume that $F$ satisfies \eqref{F} with $p>n/2$ and 
$n/2<s<p$. 
In particular, if $F$ is given by $F(v)=cv^p$ with an integer $p>1$ and some constant $c$,  assume only $s>n/2$. 
Suppose that %$s>n/2$ if $p$ is an integer %and $n/2<s<p$ if $p$ is not an integer, and 
%let 
$f\in H^{s+1}(\R^n)$ and $g\in H^s(\R^n)$. 
Then there exists a $T>0$ such that the problem (2.1) has  a unique solution in $ X_T^s$. 
%Let $n\ge 1$, $p>1$, and $p>n/2$ if $p$ is not an integer.  
%Assume that $a,\;b$ and $F$ satisfy \eqref{ab-asm} and \eqref{F}, respectively.  
%Suppose that $s>n/2$ if $p$ is an integer and $n/2<s<p$ if $p$ is not an integer, and let $u_0\in H^{s+1}(\R^n)$ and $u_1\in H^s(\R^n)$. 
%Then there exists a $T>0$ such that the problem \eqref{eq} has  a unique solution in $ X_T^s$. 

\label{th21}
 
\end{theo}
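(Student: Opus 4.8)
The plan is to prove local existence by a standard contraction-mapping (Banach fixed-point) argument in the complete metric space $(X_{M,T}, d)$ defined in \eqref{XTs}. The energy inequality \eqref{sEine} already packages the linear energy estimate one needs; the entire problem is reduced to controlling the nonlinear source term $a^{-1}F(u_t)$ in the norm $\|\cdot\|_{H^s}$, and then choosing $M$ and $T$ so that the associated solution map is a self-map and a contraction.

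\medskip

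\noindent\textbf{Setup of the map.} For $u\in X_{M,T}$ I would let $\Phi(u)=w$ be the solution of the \emph{linear} Cauchy problem
\begin{equation*}
w_{tt}-a(t)^2\Delta w+b(t)w_t=F(u_t),\qquad w(0,x)=f(x),\quad w_t(0,x)=g(x).
\end{equation*}
Its solvability in $X_T^s$ follows from standard linear theory for wave equations with smooth time-dependent coefficients (energy method plus Fourier analysis), since $a\in C^1$, $b\in C$, and the source $F(u_t)\in C([0,T):H^s)$ when $u\in X_T^s$. Applying \eqref{sEine} to $w$ gives
\begin{equation*}
E_s[w](t)\le E_s[w](0)+C\int_0^t\|a(\tau)^{-1}F(u_t(\tau))\|_{H^s}\,d\tau.
\end{equation*}
The task is then to bound $\|a^{-1}F(u_t)\|_{H^s}$ by the $X_T^s$-norm of $u$.

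\medskip

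\noindent\textbf{Nonlinear estimate.} The key step is the fractional-chain-rule/Moser-type estimate for the composition $F(u_t)$ in $H^s$. Writing $v=u_t$, I would use that $\|a^{-1}v\|_{H^s}=E_s[u]\le M$ together with $s>n/2$ (so $H^s\hookrightarrow L^\infty$) to estimate, schematically,
\begin{equation*}
\|F(v)\|_{H^s}\lsm |v|_\infty^{p-1}\|v\|_{H^s}\lsm \|v\|_{H^s}^p,
\end{equation*}
where the hypotheses \eqref{F} on $F^{(j)}$ and the Hölder-type difference bound on $F^{(\gb{p})}$ are exactly what justify the fractional chain rule for non-integer $p$; for $F(v)=cv^p$ with integer $p$ one only needs $s>n/2$ and the ordinary product estimate in $H^s$, which is why the hypotheses can be relaxed in that case. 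After pulling out the $a(\tau)^{-1}$ factor appropriately, the time integral is bounded by $C\,T\,\sup_{[0,T]}(\cdots)\le C\,T\,M^{p}$ (up to harmless weights in $a$ which are continuous on $[0,T]$ and hence bounded there). Choosing $M$ slightly larger than $2E_s[u](0)$ and then $T$ small makes $\Phi$ map $X_{M,T}$ into itself.

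\medskip

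\noindent\textbf{Contraction and conclusion.} For the contraction I would estimate $d(\Phi(u),\Phi(\bar u))=\|\Phi(u)-\Phi(\bar u)\|_{X_T^0}$ using the \emph{difference} energy inequality (i.e.\ \eqref{Energyine} applied to $w-\bar w$), which reduces matters to controlling $\|a^{-1}(F(u_t)-F(\bar u_t))\|_{L^2}$. The mean-value bound from \eqref{F}, namely $|F(v)-F(\bar v)|\lsm(|v|^{p-1}+|\bar v|^{p-1})|v-\bar v|$, combined with the $L^\infty$ control of $u_t,\bar u_t$ by $M$, yields a bound of the form $C\,T\,M^{p-1}d(u,\bar u)$; shrinking $T$ further makes the constant less than $1$, so $\Phi$ is a contraction on the complete metric space $(X_{M,T},d)$ and the Banach fixed-point theorem gives the unique solution. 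I expect the main obstacle to be the nonlinear $H^s$ estimate for non-integer $p$: one must verify that the Lipschitz/Hölder structure \eqref{F}, rather than genuine smoothness of $F$, is sufficient to run the fractional chain rule at regularity $s<p$, and carefully track the interplay between the constraints $n/2<s<p$ and $p>n/2$ so that the embeddings and product laws all apply simultaneously.
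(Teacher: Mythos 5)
Your proposal is correct and follows essentially the same route as the paper: a contraction mapping argument in $(X_{M,T},d)$ based on the energy inequality \eqref{sEine}, the fractional chain rule (the paper cites \cite[Lemma 2.3]{HW} and \cite[Theorem 3.4]{LiPo}) together with Sobolev embedding for the $H^s$ bound on $F(v_t)$, and a mean-value/H\"older estimate from \eqref{F} for the contraction in the $X_T^0$ metric. The only cosmetic difference is that you bound the time integral by $CTM^p$ using boundedness of $a$ on $[0,T]$, whereas the paper keeps it as $CM^p\int_0^T a(\tau)^{p-1}d\tau$ — immaterial for local existence, though the latter form is what feeds into the global result of Theorem \ref{th22}.
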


\noindent
{\it Proof. }
Consider the problem \eqref{eq} for $0\le t<T$. 
%For simplicity, we may assume that $t_0=0$. 
%\begin{equation}
%\begin{cases}
%\dsp
%u_{tt}-a(t)^{-2}\Delta u+b(t)u_t=F(v_t,\nb_xv), &\qquad  0<t<T, \; x\in \R^n,   \\
%  & \\
%u(0,x)=u_0(x), \; u_t(0,x)= u_1(x), &\qquad x\in \R^n. 
%\end{cases}
%\label{equv}
%\end{equation}
For $v\in X_{M,T}$, we define a map $u=\Phi(v)$, where $u$ is a unique solution of \eqref{eq} with $F(u_t)$ replaced by $F(v_t)$ for $0\le t<T$. 
We first show that $\Phi$ maps $X_{M,T}$ into itself. 
From \eqref{sEine}, 
\begin{equation}
E_s[u](t)\le E_s[u](0)+C\int_{0}^t \|a^{-1}|v_t|^p(\tau)\|_{H^s}d\tau \quad \mbox{for }
v\in X_{M,T}. 
\label{Es}
\end{equation}
Choosing $s$ such that $n/2<s<p$, we have by \cite[Lemma 2.3]{HW}, \cite[Theorem 3.4]{LiPo}, Sobolev's embedding and \eqref{ab-asm},  
\begin{equation}
\|a^{-1}|v_t|^p(\tau)\|_{H^s}
\lsm |v_t|_\infty^{p-1}\|a^{-1}v_t\|_{H^s} 
\lsm a(\tau)^{p-1}\|a^{-1}v_t\|_{H^s}^p
\le CM^p a(\tau)^{p-1}. 
\label{Fs}
\end{equation}
In particular, if $F$ is given by $F(v)=cv^p$ with an integer $p>1$ and some constant $c$,  we have only to take $s$ satisfying $s>n/2$ and see that \eqref{Fs} holds. 

By \eqref{Es} and \eqref{Fs}, 
\[
\|u\|_{X_T^s}\le E_s[u](0)+CM^p \int_{0}^T a(\tau)^{p-1}d\tau. 
\]
Let $E_s[u](0)\le M/2$ by taking $M>0$ large enough. 
Choosing $T>0$ so small that $CM^{p-1}\int_{0}^T a(\tau)^{p-1}d\tau\le 1/2$, we obtain 
\[
\|u\|_{X_T^s}\le M. 
\]
Thus, $\Phi$ maps $X_{M,T}$ into itself. 

We next show that $\Phi$ is a contration mapping if $T$ is small. 
Let $u=\Phi(v)$ and $\tilde u=\Phi(\tilde v)$ for $v,\tilde v\in X_{M,T}$. 
%Set $w=u-\tilde u$. 
Then $u-\tilde u$ satisfies
\[
E[u-\tilde u](t)\le \int_{0}^t\|a^{-1}(F(v_t)-F(\tilde v_t))\|_2d\tau. 
\]
From the condition \eqref{F},  %or 
%\[
%||v_t|^p-|\tilde v_t|^p|\le p|v_t-\tilde v_t|(|v_t|+|\tilde v_t|)^{p-1}, 
%\]
%by the mean value theorem, 
we have by the mean value theorem, Sobolev's embedding and \eqref{ab-asm},  
\begin{align*}
E[u-\tilde u](t)&\lsm
\int_{0}^t \|a^{-1}(v_t-\tilde v_t)\|_2(|v_t|_\infty+|\tilde v_t|_\infty)^{p-1}
d\tau \\
&\lsm 2^{p-1}M^{p-1}\|v-\tilde v\|_{X^0_T}\int_{0}^T a(\tau)^{p-1}d\tau, 
\end{align*}
hence, $d(u,\tilde u)\le 2^{p-1}CM^{p-1}d(v,\tilde v)\int_{0}^T a(\tau)^{p-1}d\tau$. 
Taking $T$ so small that $2^{p-1}CM^{p-1}\int_{0}^T a(\tau)^{p-1}d\tau\le 1/2$, we see that the mapping $\Phi$ is a contration. 

Since the space $(X_T^s,d)$ is a complete metric space, it follows that 
$\Phi$ has a unique fixed point, thus the problem \eqref{eq} has a unique local solution in $X_T^s$ if $T$ is sufficiently small. 
This completes the proof of Theorem \ref{th21}. 
\qed

\vspace{1cm}

We now consider the lifespan of time local solutions of \eqref{eq}. 
Let us introduce a parameter $\ep>0$ for the initial data $f$ and $g$ in \eqref{eq} such as $f(x)=\ep u_0(x)$ and $g(x)=\ep u_1(x)$. 

Let $T_\ep$ be the lifespan of solutions of \eqref{eq}, that is, $T_\ep$ is the supremum of T such that \eqref{eq} has a solution for $x\in \R^n$ and $0\le t<T$.

%Our first main result is the following theorem. 
%The following theorem states a global existence of solutions of the problem. 
%===========================================
%             Theorem  2.2
%===========================================
\begin{theo}
Let the assumptions of Theorem \ref{th21} hold and 
let $u(t,x)$ be a unique local solution provided by Theorem \ref{th21}, and also
%{\color{red}\fbox{remove} Also, assume that $\|u_0\|_{H^{s+1}}+\|u_1\|_{H^s}\le \ep$ with $\ep>0$ and} 
\[
A_{p-1}(t)=\int_{0}^t a(\tau)^{p-1}d\tau. 
\]
\vspace{-1cm}
\begin{enumerate}[(1)]  
%(i)
\item 
If $a^{p-1}\in L^1([0,\infty))$ and $\ep$ is sufficiently small, then the lifespan $T_\ep$ of $u$ satisfies $T_\ep=\infty$, i.e., 
the problem \eqref{eq} has  a unique global solution $u\in C([0,\infty):H^{s+1}(\R^n))\cap C^1([0,\infty):H^{s}(\R^n))$ 
satisfying 
\[
\sup_{t\ge 0}(\|a(t)^{-1}u(t)\|_{H^s}+\|\nb_xu\|_{H^s})<\infty. 
\]
%(2)
\item 
If $a^{p-1}\not\in L^1([0,\infty))$,  
then there exists a constant $\ep_0>0$ sufficiently small such that 
for all $0<\ep\le \ep_0$, the lifespan $T_\ep$ of $u$ satisfies 
\begin{equation}
A_{p-1}(T_\ep)\ge C\ep^{-(p-1)} \quad \mbox{with some constant }C>0.  
\label{Aep}
\end{equation}

\end{enumerate}

\label{th22}
 
\end{theo}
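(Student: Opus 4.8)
The plan is to recast both parts of the theorem as consequences of the a priori energy estimate \eqref{sEine} combined with the nonlinear estimate \eqref{Fs}, via a continuity (bootstrap) argument. The starting point is the integral inequality $E_s[u](t)\le E_s[u](0)+CM^p\int_0^t a(\tau)^{p-1}d\tau$ that already appeared in the proof of Theorem \ref{th21}, except that now I want to close it on the \emph{maximal} existence interval rather than on a short interval. Since the data are $f=\ep u_0$, $g=\ep u_1$, the initial energy scales linearly, $E_s[u](0)=C_0\ep$. The key observation is that the nonlinear contribution is controlled purely by $A_{p-1}(t)=\int_0^t a(\tau)^{p-1}d\tau$, with no explicit dependence on $T$ beyond through $A_{p-1}$, which is exactly what lets the two dichotomous cases fall out.

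For part (1), assuming $a^{p-1}\in L^1([0,\infty))$, I would set $A_\infty=\int_0^\infty a(\tau)^{p-1}d\tau<\infty$ and run a bootstrap. First I would fix a target bound $M$ and define the set of times $t$ on which $E_s[u](t)\le M$; by the local theory this set is nonempty, relatively open, and closed in $[0,T_\ep)$. On this set, \eqref{sEine} together with \eqref{Fs} gives
\[
E_s[u](t)\le C_0\ep + C M^{p}A_\infty.
\]
Choosing $M=2C_0\ep$ and then taking $\ep$ small enough that $C(2C_0\ep)^{p-1}A_\infty\le 1/2$, i.e. $\ep^{p-1}\le (2C(2C_0)^{p-1}A_\infty)^{-1}$, the right-hand side is bounded by $C_0\ep+\tfrac12 M = M$, strictly improving the bootstrap assumption. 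Hence the bound $E_s[u](t)\le M$ propagates to all $t<T_\ep$; this uniform bound, via the local existence criterion, forbids $T_\ep<\infty$, so $T_\ep=\infty$, and the stated supremum is finite since $\sup_t(\|a^{-1}u_t\|_{H^s}+\|\nb_x u\|_{H^s})=\sup_t E_s[u](t)\le M$.

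For part (2), with $a^{p-1}\notin L^1$, global control is no longer available, but I would extract the lower bound on the lifespan directly from the same inequality. On $[0,T_\ep)$ I again impose $E_s[u](t)\le M$ and obtain $E_s[u](t)\le C_0\ep+CM^pA_{p-1}(t)$; taking $M=2C_0\ep$, the bootstrap stays closed as long as $C(2C_0\ep)^{p-1}A_{p-1}(t)\le 1/2$, that is, as long as $A_{p-1}(t)\le C'\ep^{-(p-1)}$. Since $A_{p-1}$ is continuous, increasing, and (because $a^{p-1}\notin L^1$) unbounded, it reaches this threshold at a finite time, and the solution cannot break down before then; therefore $A_{p-1}(T_\ep)\ge C'\ep^{-(p-1)}$, which is \eqref{Aep}. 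I would take $\ep_0$ small enough that $M=2C_0\ep_0$ still lies in the range where the local theory and the estimate \eqref{Fs} apply.

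The main obstacle, and the step deserving the most care, is the bootstrap/continuation mechanism itself: turning the short-time solution of Theorem \ref{th21} into one on the maximal interval and justifying that a uniform a priori bound on $E_s[u]$ precludes finite-time blow-up. This requires a standard but nontrivial continuation criterion — that if $\limsup_{t\to T_\ep^-}E_s[u](t)<\infty$ then the solution extends past $T_\ep$ — which hinges on the fact that the local existence time in Theorem \ref{th21} depends on the data only through the size $M$ and through $\int_0^T a^{p-1}$, both of which remain controlled. Once that continuation statement is in place, the rest is the elementary algebra of choosing $M$ and $\ep$ above; the genuinely delicate point is ensuring the local time does not shrink to zero as one approaches $T_\ep$.
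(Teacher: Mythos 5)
Your argument is correct in substance, but it takes a genuinely different route from the paper. The paper does not use an a priori estimate plus bootstrap plus continuation at all: it simply reruns the contraction-mapping argument of Theorem \ref{th21} with the ball radius $M$ replaced by $C_d\ep$ and with the time interval taken to be all of $[0,\infty)$ when $a^{p-1}\in L^1$ (respectively $[0,T)$ otherwise), so that the smallness condition becomes $2^{p-1}CC_d^{p-1}\ep^{p-1}C_{a,p}\le 1/2$ with $C_{a,p}=\int_0^\infty a^{p-1}d\tau$ or $A_{p-1}(T)$. Global existence in case (1) then comes out directly as a fixed point in the global-in-time space, and \eqref{Aep} in case (2) is just the statement that the contraction closes as long as $\ep^{p-1}A_{p-1}(T)$ stays below a fixed constant. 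The advantage of the paper's route is precisely that it sidesteps the continuation criterion you correctly single out as the delicate step of your approach: there is no need to show that the local existence time does not degenerate near $T_\ep$, because the fixed point is constructed on the maximal interval in one stroke. Your bootstrap approach is more robust (it would survive if the solution were produced by some other means than contraction), but to make it complete you would have to actually prove the continuation lemma, i.e.\ that Theorem \ref{th21} restarted from data at time $t_0$ gives an existence time bounded below uniformly for $t_0$ in a bounded set, using the local integrability of $a^{p-1}$ and the uniform bound $E_s[u](t_0)\le M$. One small arithmetic point: with $M=2C_0\ep$ and $C M^{p-1}A_\infty\le 1/2$ you recover $E_s[u](t)\le C_0\ep+\tfrac12 M=M$ exactly, not a strict improvement; take $M=3C_0\ep$ or demand $CM^{p-1}A_\infty\le 1/4$ so that the open--closed argument genuinely closes.
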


\noindent
{\bf Remark. }
From the theorem above, we see that 
if $n=1,2$, then global solutions of \eqref{eq} exist 
for all $p>1$. 

\noindent
{\it Proof. }
In the previous proof we replace $M$ with $C_d\ep$, 
where $\|u_0\|_{H^{s+1}}+\|u_1\|_{H^s}\le C_d/2$, and 
set for $X_T^s$
\begin{equation}
\begin{split}
%X_T^s&=C([0,T):H^{s+1}(R^n))\cap C^1([0,T):H^{s}(R^n)), \\
X_0&=\{u\in X_T^s: \|u\|_{X_{a,p}^s}\le  C_d\ep \}, \\
\|u\|_{X_{a,p}^s}&=\sup_{t\in I_{a,p}}E_s[u](t), \\
I_{a,p}&=
\begin{cases}
[0,\infty) & \mbox{ if }a^{p-1}\in L^1([0,\infty)), \\
[0,T) & \mbox{ otherwise  }, 
\end{cases}
\end{split}
\label{X0}
\end{equation}

\begin{equation}
C_{a,p}=
\begin{cases}
\dsp \int_{0}^\infty a(\tau)^{p-1}d\tau & \mbox{ if }a^{p-1}\in L^1([0,\infty)), \\
A_{p-1}(T) & \mbox{otherwise}.
\end{cases}
\label{Cap}
\end{equation}
We see from the poof of Theorem \ref{th21} that 
the mapping $\Phi$ is a contration from $X_0$ into itself, 
by choosing $\ep$ so small that 
\begin{equation}
2^{p-1}C C_d^{p-1}\ep^{p-1}C_{a,p}\le \fa 12.  
\label{epC}
\end{equation}
Thus, it follows from \eqref{Cap} and \eqref{epC} that 
if $a^{p-1}\in L^1([0,\infty))$, then there exists a unique global solution of \eqref{eq} since $C_{a,p}$ is a constant. 

On the other hand, if $a^{p-1}\not\in L^1([0,\infty))$, then we obtain \eqref{Aep} by \eqref{Cap} and \eqref{epC}. 
%Combining \eqref{epC} with \eqref{Cap},  we obtain \eqref{Aep}, in particular, $T_\ep=\infty$ if $a^{p-1}\in L^1([0,\infty))$. 
This completes the proof of Theorem \ref{th22}. 
\hfill\qed

\vspace{5mm}
%{\color{red} この下検討}

In the end of this section, we discuss specific coefficients $a(t)$ and $b(t)$ which are applicable to Theorem \ref{th22} by considering the equation in the flat FLRW spacetime.

If the scale factor of the expanding FLRW metric is $(t+1)^\al$, then 
$\Box_g u=-F(u_t)$ is of the form \eqref{eq} with $(a(t),b(t))=((t+1)^{-\al},\mu/(t+1))$, where $\al>0$ and $\mu\ge 0$ are constants. See \cite{TW4} for details. 

We see from the theorem above that global solutions of \eqref{eq} exist for small initial data if $p>1+1/\al$ and $\mu\ge \al$.  
If $1<p\le 1+1/\al$ and $\mu\ge \al$, we obtain from \eqref{Aep} the lower bounds of the lifespan $T_\ep$ 
\begin{align*}
T_\ep &\ge C\ep^{\fa{-(p-1) }{1-\al(p-1)}} &&\mbox{if }1<p< 1+\fa 1\al, \\
T_\ep &\ge \exp C\ep^{-(p-1)} &&\mbox{if }p= 1+\fa 1\al, 
\end{align*}
where $C$ is a constant independent of $\ep$. 

On the other hand, for this pair $(a(t),b(t))$, the following blow-up conditions are shown in \cite{TW4}
%footnote
\footnote{
In Theorems 2.1 and 2.3 of \cite{TW4} $n\ge 2$ is assumed as a condition, however both of these theorems hold also for $n=1$. This fact is ensured by changing the test function $\phi$ in  the proof of Theorem 2.1, and by noting that \cite[Lemma 2.2]{TW4} and the property of finite speed of propagation do not rely on the space dimension (see also \cite{HHP}). }:
\begin{align*}
&1<p<1+\fa 1\mu \qquad \mbox{if }\al\ge 1, \\
&\begin{cases}\dsp 1<p\le 1+\fa 2{(1-\al)(n-1)+\mu+\al} \\
 \quad \mbox{or }\\
 \dsp 1<p< 1+\fa {1}{n(1-\al)+\mu}
\end{cases} \qquad \mbox{if }0\le \al< 1. 
\end{align*}
The upper bounds of the lifespan in the case $\al\ge 1$ are 
\begin{align*}
&T_\ep^{1-\mu(p-1)}(\ln T_\ep)^{-n(p-1)}\le C\ep^{-(p-1)} &&\mbox{if }\al=1, 
\\
&T_\ep\le C\ep^{-(p-1)/\{1-\mu(p-1)\}} &&\mbox{if }\al>1. 
\end{align*}
See \cite[Theorem 2.1]{TW4} for the upper bounds of $T_\ep$ in the case $0\le \al<1$. 

It follows from these results above that in the special case $\mu=\al\ge 1$, the critical exponent $p_c$ is $p_c=1+1/\al$, which distinguishes between the existence and nonexistence of time-global solutions of \eqref{eq}, although the case $p=p_c$ is an open problem. Moreover, we see that if $1<p<1+1/\mu$ and $\mu=\al>1$, then the lifespan $T_\ep$ is estimated from above and below by the same power of $\ep$, thus 
\begin{equation}
T_\ep \sim \ep^{-(p-1)/\{1-\al(p-1)\}}. 
\label{Tep}
\end{equation}
We also observe that 
%On the other hand, 
if $n=1$ and $\mu=\al< 1$,  then the critical exponent $p_c$ is determined exactly by $p_c=1+1/\al$ which is in the blow-up range, and  the lifespan has the sharp estimate which is the same as \eqref{Tep}. 

If the scale factor is $e^{Ht}$ which corresponds to the de Sitter spacetime, then there appears a pair of coefficients $(a(t),b(t))=(e^{-Ht},nH)$, where $H>0$ is  the Hubble constant. % and $\mu\ge 0$ is a constant. 
It follows from the theorem that there exist global solutions for small initial data and $p>1$. % if $p>1$ and $\mu\ge H$. 
This result contrasts sharply with the case of the nonlinear terms $|u|^p$ or $|\nb_x u|^p$ as stated in Introduction. 

%Consider the equation with the time derivative nonlinear term jutjp.
%Let $a(t)=(t+1)^{-\al}$ with $\al>0$ and $b(t)={\color{blue} n}\al/(t+1)$. 

%============================
%      Section 3  Blow up 
%============================
\section{Blow up}
\setcounter{equation}{0}

We consider the case where $b$ is a non-positive function in \eqref{eq} and $F(u_t)=|u_t|^p$, which gives rise to a generalization of the equation in the anti-de Sitter spacetime. 
To make the equation easier to see, we rewrite the problem as follows. 
\begin{equation}
\begin{cases}
\dsp
u_{tt}-a(t)^2\Delta u=b_-(t)u_t+|u_t|^p, &\qquad  t>0, \; x\in \R^n,   \\
  & \\
u(0,x)=\ep u_0(x), \; u_t(0,x)=\ep u_1(x), &\qquad x\in \R^n. 
\end{cases}
\label{eq-b}
\end{equation}
Another main result is the following theorem. 

%===========================================
%　　　　　　　Theorem  3.1
%===========================================
\begin{theo}
Let $n\ge 1$ and $p>1$, and let 
$u_0\in C^2(\R^n)$ and $u_1\in C^1(\R^n)$,  $\mbox{\rm supp }u_0, \mbox{\rm supp }u_1\subset \{|x|\le R\}$ with $R>0$ and 
\[
\dsp \int_{\R^n} u_1(x)dx >0.
\] 
Assume that $a\in C^1([0,\infty))$ and $b_-\in C([0,\infty))$ satisfy $a(t), b_-(t)>0$ for $t\ge 0$, and 
\begin{equation}
\al\equiv\sup_{t\ge 0}\fa{na(t)}{A(t)+R}\le \inf_{t\ge 0}b_-(t)\equiv \bt,  
\label{albt} 
\end{equation}
%\eqref{ab-asm0}. 
where
\begin{equation}
A(t)=\int_0^t a(s)ds. 
\label{aA}
\end{equation}
Suppose that the problem \eqref{eq-b} has  a classical solution $u\in C^2([0,T)\times\R^n)$. 
Then $T<\infty$ and there exists a constant $\ep_0>0$ such that for $0<\ep\le \ep_0$, the lifespan $T_\ep$ of the solution $u$ to \eqref{eq-b} satisfies 
\begin{align*}
T_\ep &\le C\ep^{-(p-1)}\ln\fa 1\ep&& \mbox{if  }\bt=\al, \\
T_\ep &\le C\ln \fa 1\ep && \mbox{if  }\bt>\al,
\end{align*}
where $C>0$ is a constant independent of $\ep$.  

\label{th31}
\end{theo}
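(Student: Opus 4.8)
The plan is to collapse the PDE to a scalar Bernoulli-type differential inequality for the spatial average of $u_t$ and then read off both the finiteness of $T$ and the lifespan from that inequality. First I would invoke finite speed of propagation: since the speed is $a(t)$ and the data are supported in $\{|x|\le R\}$, the solution satisfies $\mathrm{supp}\,u(t,\cdot)\subset\{|x|\le R+A(t)\}$, so $u_t(t,\cdot)$ is supported in a ball of volume $V(t)=C_n(R+A(t))^n$, where $C_n=|B_1|$. Setting
\[
H(t)=\int_{\R^n}u_t(t,x)\,dx
\]
and integrating \eqref{eq-b} over $\R^n$ (the Laplacian term drops by compact support) gives $H'(t)=b_-(t)H(t)+\int_{\R^n}|u_t|^p\,dx$. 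Jensen's inequality on the support ball yields $\int_{\R^n}|u_t|^p\,dx\ge H(t)^p/V(t)^{p-1}$, hence
\[
H'(t)\ge b_-(t)H(t)+\frac{H(t)^p}{V(t)^{p-1}}.
\]

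Because $H(0)=\ep\int u_1\,dx>0$ and the right-hand side is positive whenever $H>0$, a continuity argument keeps $H(t)>0$ on the whole existence interval. Next I would linearize: with $w=H^{1-p}>0$ the inequality becomes $w'+(p-1)b_-w\le-(p-1)V^{1-p}$, and the integrating factor $e^{(p-1)B(t)}$, $B(t)=\int_0^tb_-(s)\,ds$, gives after integration
\[
0<e^{(p-1)B(t)}w(t)\le(\ep I_1)^{-(p-1)}-(p-1)\int_0^t\frac{e^{(p-1)B(s)}}{V(s)^{p-1}}\,ds,
\]
where $I_1=\int u_1\,dx$. Positivity of the left-hand side forces the solution to cease before the right-hand side vanishes, which establishes $T<\infty$ together with the controlling estimate $(p-1)\int_0^{T_\ep}e^{(p-1)B(s)}V(s)^{-(p-1)}\,ds\le(\ep I_1)^{-(p-1)}$.

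The crux is to bound this weighted integral from below using \eqref{albt}. The key identity is $V'(t)/V(t)=na(t)/(R+A(t))$, so \eqref{albt} reads precisely $V'/V\le\al\le\bt\le b_-$. Thus
\[
\frac{d}{ds}\ln\!\Big(\frac{e^{(p-1)B(s)}}{V(s)^{p-1}}\Big)=(p-1)\Big(b_-(s)-\frac{V'(s)}{V(s)}\Big)\ge(p-1)(\bt-\al)\ge0,
\]
whence $e^{(p-1)B(s)}V(s)^{-(p-1)}\ge V(0)^{-(p-1)}e^{(p-1)(\bt-\al)s}$. When $\bt>\al$ this lower bound grows exponentially, so the integral is $\gtrsim e^{(p-1)(\bt-\al)T_\ep}$; comparing with $(\ep I_1)^{-(p-1)}$ and taking logarithms yields $T_\ep\le C\ln\frac1\ep$. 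When $\bt=\al$ the integrand is only bounded below by the constant $V(0)^{-(p-1)}$, so the integral is $\gtrsim T_\ep$, which gives $T_\ep\lesssim\ep^{-(p-1)}$ and in particular the claimed $T_\ep\le C\ep^{-(p-1)}\ln\frac1\ep$.

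I expect the genuine obstacles to lie not in the ODE analysis but in two supporting facts. The first is rigorously justifying the domain-of-dependence bound $\mathrm{supp}\,u(t,\cdot)\subset\{|x|\le R+A(t)\}$ for the time-dependent operator $\p_t^2-a(t)^2\Delta$; this is what simultaneously removes the Laplacian term and supplies the sharp volume $V(t)$ in Jensen's inequality, and it rests on the energy-flux estimate for variable speeds. The second is controlling the differentiation under the integral sign and the positivity and finiteness of $H$ for the classical solution on $[0,T)$. Once these are secured, the dichotomy in \eqref{albt}, reinterpreted as the sign of $b_--V'/V$, is exactly what separates the logarithmic lifespan ($\bt>\al$) from the polynomial one ($\bt=\al$).
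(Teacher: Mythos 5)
Your proof is correct, but it takes a genuinely different route from the paper's. The paper keeps the differential inequality $W_t\ge b_-W+(A(t)+R)^{-n(p-1)}W^p$ nonlinear and runs a bootstrap: it first derives $W\ge W_0\exp\lp{\int_0^t b_-}$, feeds this back into the nonlinear term to upgrade the linear rate from $b_-$ to $b_-+\gamma W_0^{p-1}$, then splits $W^p=W^{(1-\dt)(p-1)}W^{1+\dt(p-1)}$ and tunes the auxiliary parameter $\dt$ differently in the two cases ($\dt=c\gamma W_0^{p-1}/(\bt+\gamma W_0^{p-1})$ when $\al=\bt$, $\dt=(\bt-\al)/(2\bt)$ when $\al<\bt$) so as to land on an ODE of the form $W_t\ge Ke^{\mu t}W^{1+\dt(p-1)}$ whose blow-up time gives the stated bounds. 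Your Bernoulli substitution $w=H^{1-p}$ linearizes the same inequality exactly, so the entire case analysis collapses into the single lifespan criterion $(p-1)\int_0^{T_\ep}e^{(p-1)B(s)}V(s)^{-(p-1)}ds\le(\ep I_1)^{-(p-1)}$, with the dichotomy entering only through the monotonicity bound $e^{(p-1)B(s)}V(s)^{-(p-1)}\ge V(0)^{-(p-1)}e^{(p-1)(\bt-\al)s}$; this is cleaner and needs no parameter tuning. It also buys you something concrete: in the case $\al=\bt$ your argument yields $T_\ep\le C\ep^{-(p-1)}$, strictly sharper than the paper's $C\ep^{-(p-1)}\ln(1/\ep)$ (the logarithm there is an artifact of the bootstrap, in which both the exponential rate and the exponent gain $\dt(p-1)$ are only of size $\ep^{p-1}$). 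The two supporting facts you flag as the real obstacles are handled identically in the paper: the support bound $\mbox{supp}\,u(t,\cdot)\subset\{|x|\le A(t)+R\}$ is quoted from Corollary 4.2 of \cite{TW7}, and the positivity of $\int u_t\,dx$ is obtained from $W_t\ge b_-W$ exactly as in your continuity argument.
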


\begin{proof}

We first note from Corollary 4.2 in \cite{TW7} that $C^2$-solution $u$ of \eqref{eq-b} has the property of finite speed of propagation, and satisfies
\begin{equation}
\mbox{supp }u(t,\cdot)\subset \{|x|\le A(t)+R\}, 
\label{suppu0}
\end{equation}
where $A(t)$ is given by \eqref{aA}.

We prove the theorem in a similar manner to that in \cite{Na}. 
Set  
\begin{equation}
W(t)=\int u_t(t,x)dx \quad \mbox{and }\quad 
W_0=\int u_t(0,x)dx=\ep\int u_1(x)dx>0. 
\label{wdata}
\end{equation}
Integrating Eq \eqref{eq-b} over $\R^n$, we have 
\begin{equation}
W_t(t)=b_-W(t)+\int |u_t|^p dx 
\label{w1}
\end{equation}
by the divergence theorem. 
We see that $W_t\ge b_-W$, hence 
\begin{equation}
W(t)\ge W_0\exp\lp{\int_{0}^t b_-(s)ds}>0 \qquad \mbox{for }t\ge 0. 
\label{wexp}
\end{equation}
By H\"older's inequality and \eqref{suppu0}, 
\begin{equation}
\int|u_t|^pdx \ge (A(t)+R)^{-n(p-1)}W(t)^p. 
\label{Ho}
\end{equation}
From \eqref{w1}-\eqref{Ho}, 
\begin{align}
W_t&\ge b_-W+(A(t)+R)^{-n(p-1)}W^p 
\label{w_t}\\
 &\ge b_-W+(A(t)+R)^{-n(p-1)}W_0^{p-1}\exp\lp{(p-1)\int_{0}^t b_-(s)ds}W.  \nonumber
\end{align}

We observe from \eqref{aA} that 
\begin{equation}
\ln(A(t)+R)=\int_{0}^t \fa{a(s)}{A(s)+R}ds+\ln R. 
\label{ln}
\end{equation}
We then have
\begin{align}
W_t &\ge b_-W+R^{-n(p-1)}W_0^{p-1}\exp\lbt{(p-1)\int_{0}^t \lp{b_-(s)-\fa{na(s)}{A(s)+R}}ds}W.  \nonumber
\end{align}
Set $\dsp\gamma=R^{-n(p-1)}$. 
By assumption, we see that
\[
W_t(t)\ge (b_-(t)+\gamma W_0^{p-1})W \qquad \mbox{for }t\ge 0, 
\]
thus obtain 
\begin{equation} 
 W(t)\ge W_0\exp\lp{\int_{0}^t \lp{b_-(s)+\gamma W_0^{p-1}}ds}
 \qquad \mbox{for }t\ge 0. 
 \label{w}
\end{equation}
By \eqref{w_t} and \eqref{w}, taking $0<\delta <1$, we have 
\begin{align}
W_t &\ge (A(t)+R)^{-n(p-1)}W^p 
\label{WAtR}\\
 &\ge (A(t)+R)^{-n(p-1)}W_0^{(1-\dt)(p-1)} \nonumber\\
 & \qquad \cdot \exp\lb{(1-\dt)(p-1)\lp{\int_{0}^t \lp{b_-(s)+\gamma W_0^{p-1}}ds}}W^{1+\dt(p-1)}  \nonumber
\end{align} 
for $t\ge 0$ since $b_-(t)W(t)>0$. 
We determine $\delta$ below. 

Using \eqref{albt} and \eqref{ln}, we obtain
\begin{align}
W_t & \ge \gamma W_0^{(1-\delta)(p-1)} \nonumber \\
 & \quad \cdot \exp\lb{(p-1)\int_{0}^t \lp{(1-\dt)(b_-(s)+\gamma W_0^{p-1})-\fa{na(s)}{A(s)+R}}ds} \nonumber \\
 &\qquad \cdot W^{1+\dt(p-1)} \nonumber\\
& \ge \gamma W_0^{(1-\delta)(p-1)}
\exp\lb{(p-1)\int_{0}^t \lp{(1-\dt)(\bt+\gamma W_0^{p-1})-\al}ds}W^{1+\dt(p-1)} 
\label{Wt-0} 
\end{align}
for $t\ge 0$. 

By assumption $\bt\ge \al$ and $W_0>0$, we can choose $\dt>0$ so small that 
\[
(1-\dt)(\bt+\gamma W_0^{p-1})-\al>0. 
%\quad {\color{red} \lbt{0<\dt <\fa{\bt-\al+\gamma W_0^{p-1}}{\bt+\gamma W_0^{p-1}}}} 
\]
In fact, how to choose $\dt$ depends on whether $\al=\bt$ or $\al<\bt$. 

\noindent
{\bf Case $\al=\bt$}. Let $\dt = c\gamma W_0^{p-1}/(\bt+\gamma W_0^{p-1})$ with some $0<c<1$. Then $(1-\dt)(\bt+\gamma W_0^{p-1})-\al=(1-c)\gamma W_0^{p-1}$. Hence from \eqref{Wt-0}, we have 
\[
W_t \ge \gamma W_0^{(1-\delta)(p-1)}
\exp\lb{(p-1)(1-c)\gamma W_0^{p-1}t}W^{1+\dt(p-1)}
\]
for $t\ge 0$, which shows that $W(t)$ blows up in a finite time for $p>1$.
It follows from simple calculation that the existence time $T$ satisfies $T\lesssim \ep^{-(p-1)}\ln \ep^{-1}$.

\noindent
{\bf Case $\al<\bt$}. Let $\dt = (\bt-\al)/(2\bt)$. Then $(1-\dt)(\bt+\gamma W_0^{p-1})-\al>(\bt-\al)/2$. Hence from \eqref{Wt-0}, we have 
\[
W_t \ge \gamma W_0^{(1-\delta)(p-1)}
\exp\lb{(p-1)(\bt-\al)t/2}W^{1+\dt(p-1)}
\]
for $t\ge 0$, which shows that $W(t)$ blows up in a finite time for $p>1$.
It follows from simple calculation that $T\lesssim \ln \ep^{-1}$.
This completes the proof of Theorem \ref{th31}. 
\end{proof}

As a corollary, we obtain the following result from \eqref{WAtR}. 
%===========================================
%　　　　　　　Corollary  3.1
%===========================================
\begin{coro}
Let $n,p, u_0$ and $u_1$ be as in Theorem \ref{th31}. 
%Let $n\ge 1$ and $p>1$, and let $u_0\in C^2(\R^n)$ and $u_1\in C^1(\R^n)$,  $\mbox{\rm supp }u_0, \mbox{\rm supp }u_1\subset \{|x|\le R\}$ with $R>0$ and 
%\[
%\dsp \int_{R^n} u_1(x)dx >0.
%\] 
Assume that $a\in C^1([0,\infty))$ and $b_-\in C([0,\infty))$ satisfy $a(t), b_-(t)>0$ for $t\ge 0$, and 
$A(t)^{-n(p-1)}\not\in L^1([0,\infty))$, 
where $A(t)$ is given by \eqref{aA}. 
Suppose that the problem \eqref{eq-b} has  a classical solution $u\in C^2([0,T)\times\R^n)$. 
Then $T<\infty$ and there exists a constant $\ep_0>0$ such that for $0<\ep\le \ep_0$, 
$T$ satisfies 
\[
\int_0^T (A(t)+R)^{-n(p-1)}dt \le C\ep^{-(p-1)}. 
\]
In particular, if $a(t)=(t+1)^{\al}$ with $\al\in\R$, then the lifespan $T_\ep$ of the solution $u$ to \eqref{eq-b} satisfies 
\begin{align*}
T_\ep &\le C\ep^{-(p-1)/\{1-n(p-1)(\al+1)\}}&& \mbox{if  }n(p-1)(\al+1)<1, \\
T_\ep &\le \exp \lp{C\ep^{-(p-1)}} && \mbox{if  }n(p-1)(\al+1)=1,
\end{align*}
where $C>0$ is a constant independent of $\ep$.  

\label{cor32}
\end{coro}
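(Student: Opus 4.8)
The plan is to derive the corollary directly from the differential inequality \eqref{WAtR} established in the proof of Theorem \ref{th31}, specializing the choice of $\dt$ to avoid any case distinction between $\al=\bt$ and $\al<\bt$. Starting from
\[
W_t \ge (A(t)+R)^{-n(p-1)}W^p,
\]
which is the first line of \eqref{WAtR} and holds for $t\ge 0$, I would set $\Psi(t)=W(t)^{-(p-1)}$ and note that $\Psi_t = -(p-1)W^{-p}W_t \le -(p-1)(A(t)+R)^{-n(p-1)}$. Integrating from $0$ to $t$ gives
\[
W(t)^{-(p-1)} \le W_0^{-(p-1)} - (p-1)\int_0^t (A(s)+R)^{-n(p-1)}ds.
\]
Since the right-hand side must remain positive as long as the solution exists, and since $W_0 = \ep\int u_1\,dx$, the blow-up time $T$ is forced to satisfy
\[
\int_0^T (A(t)+R)^{-n(p-1)}dt \le \fa{1}{p-1}W_0^{-(p-1)} = C\ep^{-(p-1)},
\]
which is the first claimed bound. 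The hypothesis $A(t)^{-n(p-1)}\notin L^1([0,\infty))$ guarantees that the left-hand integral diverges as $T\to\infty$, so finiteness of the integral forces $T<\infty$; this is exactly what replaces the sign condition \eqref{albt} of the theorem.

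For the explicit estimates with $a(t)=(t+1)^\al$, I would first compute $A(t)=\int_0^t (s+1)^\al ds = \lb{(t+1)^{\al+1}-1}/(\al+1)$ for $\al\neq -1$, so that $A(t)+R \sim (t+1)^{\al+1}$ for large $t$ (and the constant $R$ only affects an $O(1)$ correction near $t=0$). Then $(A(t)+R)^{-n(p-1)} \sim (t+1)^{-n(p-1)(\al+1)}$, and the integrability threshold of this power is precisely $n(p-1)(\al+1)=1$, matching the two cases in the statement. In the subcritical case $n(p-1)(\al+1)<1$ the integral $\int_0^T (t+1)^{-n(p-1)(\al+1)}dt$ behaves like $T^{1-n(p-1)(\al+1)}$, so the bound $\int_0^T \lsm \ep^{-(p-1)}$ rearranges to $T_\ep \le C\ep^{-(p-1)/\{1-n(p-1)(\al+1)\}}$. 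In the critical case the integral behaves like $\ln(T+1)$, giving $\ln T_\ep \lsm \ep^{-(p-1)}$, hence $T_\ep \le \exp(C\ep^{-(p-1)})$.

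The only genuine subtlety, and the step I would treat most carefully, is justifying the reduction to the clean inequality $W_t \ge (A(t)+R)^{-n(p-1)}W^p$ as the single driving estimate: this is the form already recorded at the top of \eqref{WAtR}, valid because $b_-W>0$ allows us to drop the $b_-W$ term in \eqref{w_t}, so no extra hypothesis on the relative size of $\al$ and $\bt$ is needed here. I would also verify that the constant in $A(t)+R \sim (t+1)^{\al+1}$ is uniform enough that the asymptotic comparison of the integral is legitimate for all $t\in[0,T]$, and handle the boundary value at $t=0$ (where $A(0)+R=R$) so that the lower limit of integration contributes only a harmless constant. Everything else is the elementary integration of a power and the rearrangement of the resulting inequality for $T_\ep$, so I do not expect any serious obstacle beyond bookkeeping of constants.
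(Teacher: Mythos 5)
Your argument is correct and is exactly the route the paper intends: the corollary is presented as an immediate consequence of the first line of \eqref{WAtR}, i.e.\ $W_t \ge (A(t)+R)^{-n(p-1)}W^p$ together with $W>0$ from \eqref{wexp} (none of which uses \eqref{albt}), and integrating $W^{-(p-1)}$ yields the stated lifespan bound, with the power-law case following from the same asymptotics of $A(t)$. The only caveat is that your comparison $A(t)+R\sim (t+1)^{\al+1}$ requires $\al>-1$, which is the only regime where the explicit exponents in the corollary are meaningful in any case.
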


In the end of this subsection, we discuss specific coefficients $a(t)$ and $b_-(t)$ which are applicable to Theorem \ref{th31} by considering the equation in the anti-de Sitter spacetime.

If the scale factor is $e^{-Ht}$ which corresponds to the anti-de Sitter spacetime, then there appears a pair of coefficients $(a(t),b_-(t))=(e^{Ht},nH)$, where $H>0$ is  the Hubble constant. 
Let $HR\ge 1$ with $R$ given in Theorem \ref{th31}. We then see that $\al=\bt=nH$ in Theorem \ref{th31}, so that 
the lifespan satisfies $T_\ep\le C\ep^{-(p-1)}\ln 1/\ep$.

Finally, if the scale factor of the contracting FLRW metric is $(t+1)^{-\al}$, then 
$\Box_g u=-F(u_t)$ is of the form \eqref{eq-b} with $(a(t),b_-(t))=((t+1)^\al,\mu/(t+1))$, where $\al>0$ and $\mu\ge 0$ is a constant. 
This case reduces to the generalized Tricomi equations.
We refer to \cite{LP,LS} for the Tricomi equations.

%=================================================
%        References
%=================================================

\end{document}